%
%
%


\documentclass{proc-l}






\newtheorem{theorem}{Theorem}[section]

\newtheorem{cor}[theorem]{Corollary}
\newtheorem{prop}[theorem]{Proposition}
\newtheorem{alg}[theorem]{Algorithm}
\newtheorem{notation}[theorem]{Notation}

\theoremstyle{definition}
\newtheorem{definition}[theorem]{Definition}

\theoremstyle{remark}

\numberwithin{equation}{section}

\begin{document}

\title[Recursive Relations for 2-Variable Weighted Shifts]{Recursive Relations \\ for 2-Variable Weighted Shifts}


\author{Edward L. White}
\address{}
\curraddr{}
\email{}
\thanks{}

\date{}

\dedicatory{}


\begin{abstract}
In this paper, we outline a method to determine all recursive relations for a subnormal 2-variable weighted shift, up to total degree $k$, entirely from the representing measure. This allows us to show that the densities of the atoms do not affect the recursive relations. We then show that this method can be used to explicitly construct the Gröbner basis for the ideal of recursive relations.
\end{abstract}

\maketitle

\section{Introduction}
Let $\mathbb{R},\mathbb{R}_{+},\mathbb{Z},\text{ and }\mathbb{Z}_{+}(\equiv \mathbb{N}_0)$ denote the real numbers, positive real numbers, integers, and nonnegative integers, respectively. Given a bounded sequence of positive real numbers $\alpha \equiv \{\alpha\}_{n=0}^{\infty}$, let $W_{\alpha}:\ell^2(\mathbb{Z}_{+}) \rightarrow \ell^2(\mathbb{Z}_{+})$ denote the \textit{unilateral weighted shift} defined by $W_{\alpha} e_n = \alpha_n e_{n+1}$, for all $ n \in \mathbb{Z}_{+}$ where $\{e_n\}_{n=0}^\infty$ is the canonical orthonormal basis for $\ell^2(\mathbb{Z}_{+})$. The moments of $\alpha$ are given as

$$
\gamma_k \equiv \gamma_k(\alpha):=\left\{\begin{array}{cc}
1 & \text { if } k=0 \\
\alpha_0^2 \cdot \ldots \cdot \alpha_{k-1}^2 & \text { if } k>0
\end{array}\right\} .
$$

We recall that an operator is \textit{normal} if it commutes with its adjoint and  \textit{subnormal} if it is the restriction of a normal operator to an invariant subspace. We also recall the well known characterization of subnormality for a unilateral weighted shift \cite[III.8.16]{conway1991theory} \cite{gellar1970subnormal}. Explicitly, $W_\alpha$ is subnormal if and only if there exists a probability measure $\mu$ supported in $\left[0,\left\|W_\alpha\right\|^2\right]$ such that
$$
\gamma_k(\alpha):=\alpha_0^2 \cdots \alpha_{k-1}^2=\int t^k d \mu(t)(k \geq 1) 
$$
where $\left\|W_\alpha\right\|=\sup_k \alpha_k.$

Let $\boldsymbol{\alpha_k}, \boldsymbol{\beta_k} \in \ell^{\infty}(\mathbb{Z}^2_+)$ be two double-indexed positive bounded sequences,  for $\boldsymbol{k}:=(k_1,k_2) \in \mathbb{Z}^2_+$. Let $\ell^2\left(\mathbb{Z}_{+}^2\right)$ be the Hilbert space of square-summable complex sequences indexed by $\mathbb{Z}_+^2$ with orthonormal basis $\{e_{\boldsymbol{k}}\}_{\boldsymbol{k}\in \mathbb{Z}^2_+}$. (Recall $\ell^2(\mathbb{Z}_+^2)$ is isometrically isomorphic to $\ell^2(\mathbb{Z_+})\otimes\ell^2(\mathbb{Z_+}).$) We define the \textit{2-variable weighted shift} $\mathbf{T} \equiv\left(T_1, T_2\right)$ by
$$
\begin{aligned}
& T_1 e_{\boldsymbol{k}}:=\alpha_{\boldsymbol{k}} e_{\boldsymbol{k}+\varepsilon_1} \\
& T_2 e_{\boldsymbol{k}}:=\beta_{\boldsymbol{k}} e_{\boldsymbol{k}+\varepsilon_2},
\end{aligned}
$$
where $\varepsilon_1:=(1,0)$ and $\varepsilon_2:=(0,1)$.  Given $\boldsymbol{k} \in \mathbb{Z}_{+}^2$, the moment of $(\boldsymbol{\alpha}, \boldsymbol{\beta})$ of order $\boldsymbol{k}$ is
$$
\gamma_{\boldsymbol{k}}(\mathbf{T}) :=\left\{\begin{array}{ll}
1 & \text { if } \boldsymbol{k}=(0,0) \\
\alpha_{(0,0)}^2 \cdots \alpha_{\left(k_1-1,0\right)}^2 & \text { if } k_1 \geq 1 \text { and } k_2=0 \\
\beta_{(0,0)}^2 \cdots \beta_{\left(0, k_2-1\right)}^2 & \text { if } k_1=0 \text { and } k_2 \geq 1 \\
\alpha_{(0,0)}^2 \cdots \alpha_{\left(k_1-1,0\right)}^2 \beta_{\left(k_1, 0\right)}^2 \cdots \beta_{\left(k_1, k_2-1\right)}^2 & \text { if } k_1 \geq 1 \text { and } k_2 \geq 1
\end{array}\right.
$$

There is a similar characterization for subnormality of a 2-variable weighted shift given in \cite{jewell1979commuting}. Explicitly, a 2-variable weighted shift $\mathbf{T} \equiv\left(T_1, T_2\right)$ is subnormal if and only if there is a probability measure $\mu$ defined on the 2-dimensional rectangle $R:=\left[0, \left\|T_1\right\|^2\right] \times\left[0,\left\|T_2\right\|^2\right]$ such that: 

$$\gamma_{\boldsymbol{k}}(\mu):=\int_R \mathbf{t}^{\boldsymbol{k}} d \mu(\mathbf{t}):=\int_R t_1^{k_1} t_2^{k_2} d \mu\left(t_1, t_2\right) \quad \text{ for all } \boldsymbol{k} \in \mathbb{Z}_{+}^2.$$
Further, we say $\mu$ is a representing measure for $\mathbf{T}.$ We will use the notation $(t_1,t_2)$ and $(x,y)$ interchangeably.

Given a subnormal 2-variable weighted shift with representing measure $\mu$, we recall the moment matrix associated to $\mu,$ introduced in \cite{curto1996solution,curto1998flat}
$$M(\mu)(k):=
\begin{bmatrix}
    \int1 d\mu & \int x d \mu & \int y d \mu & \cdots & \int y^{k} d\mu\\
    \int x d \mu & \int x^{2} d \mu & \int xy d \mu & \cdots & \int xy^{k}  d\mu\\
    \int y d \mu & \int xy d \mu & \int y^2 d \mu & \cdots & \int y^{k+1}  d\mu\\
    \vdots & \vdots & \vdots & \ddots  & \vdots\\
     \int y^n d \mu & \int xy^n d \mu & \int y^{n+1} d \mu & \cdots &\int y^{2k}  d\mu\\
\end{bmatrix}.
$$
Note the above definition does differ from the traditional definition of the moment matrix for $\mathbf{T}$, which has the $(i,j)$ entry as $\gamma_{(i,j)}$ or $M(\gamma)(k):=\left(\gamma_{(i,j)}\right)_{i,j \geq 0, i+j \leq 2k}.$ However, in this paper we will only be concerned with subnormal 2-variable weighted shifts so, in this case, the definitions are equivalent. We also define $M(\mu) \equiv M(\mu)(\infty)$ to denote the infinite dimensional moment matrix.

Let $\delta(x,y)$ denote the point-mass probability measure with support $(x,y)\in\mathbb{R}^2$. It is clear that for any $(x,y)\in \mathbb{R}^2$, $\operatorname{rank}[M(\delta(x,y))(k)]=1.$ Further, $M(\delta(x,y))(k)$ is positive semidefinite as it is a Gram matrix. We say $\mu$ is \textit{finitely atomic probability measure} if $\mu
\equiv\sum\limits_{i=1}^n \lambda_i \delta(x_i,y_i)$ where $\lambda_i\in \mathbb{R_+.}$ Additionally, if $\mu$ is a finitely atomic probability measure, then $\operatorname{card} \operatorname{supp}(\mu) \geq \operatorname{rank}(M(\mu)(n))$ (\cite[7.6]{curto1996solution}).

Now if we label our columns of $M(\mu)(k)$ via lexicographic ordering (i.e.

\noindent $1,X,Y,X^2,XY,Y^2, X^3\cdots)$ then we can associate column relations with bivariate polynomials with total degree at most $k$, which will be denoted as $\mathbb{R}_k[x,y].$ For $p \in \mathbb{R}_k[x,y]$, $p(x,y) \equiv \sum\limits_{i,j\geq 0, i+j \leq k} a_{ij}x^iy^j.$ Let $\hat{p}:=(a_{ij})$ denote the vector of coefficients with respect to the basis of $\mathbb{R}_k[x,y]$ consisting of monomials in lexicographic order. We denote a linear combination of columns by $p(X,Y) := [M(\mu)(k)] \cdot\hat{p}$ for $p\in \mathbb{R}_k[x,y].$ We say  $M(\mu)(k)$ is \textit{recursively generated} if the kernel of $M(\mu)(k)$, denoted  $\operatorname{ker}(M(\mu)(k))$, has the following ideal-like property:
    $$p,q,pq \in \mathbb{R}_k[x,y], \quad p(X,Y)=\boldsymbol{0} \implies (pq)(X,Y)=\boldsymbol{0}$$
where $\boldsymbol{0}$ denotes the zero vector. Further, we say $p$ is a \textit{recursive relation} for $M(\mu)(k).$  It should also be noted that if $p \in \mathbb{R}_k[x,y]$ is a recursive relation for $M(\mu)(k),$ then it will also be a recursive relation for $M(\mu)$ due to the Flat Extension Theorem \cite[Thm. 5.13]{curto1996solution}.  When looking at recursive relations as bivariate polynomials instead of column relations, the \textit{algebraic variety} of $M(\mu)(k)$ is the intersection of the zero sets of all recursive relations of $M(\mu)(k).$ As shown in \cite{curto2005truncated}, the existence of a representing measure for a moment sequence implies that $M(\mu)(k)$ is positive semidefinite and recursively generated for all integers $k\geq 1$.

It is shown in \cite[Cor 2.2]{laurent2005revisiting} that the recursive relations for a moment matrix generate a polynomial ideal. Thus, a useful tool for analyzing the recursive relations will be Gröbner bases. If $\mathcal{I}$ is the polynomial ideal generated by the polynomials $f_1,f_2,\cdots, f_n$, denoted $\mathcal{I}:= \langle  f_1,f_2,\cdots, f_n\rangle$, we say $\mathcal{G}\subseteq\mathcal{I}$ is a Gröbner basis for $\mathcal{I}$ if the leading terms, with respect to a given ordering, are sufficient to generate all the leading terms of polynomials in $\mathcal{I}.$ In this paper, we will exclusively be using the lexicographic ordering. There are several advantages of examining a Gröbner basis of recursive relations. First, a Gröbner basis for $\mathcal{I}$ is a basis for $\mathcal{I}$ \cite[Ch 2 \S 5 Cor. 6]{cox1997ideals}. Second, they allow us to determine which polynomials are elements of $\mathcal{I}.$ Thus, given a moment matrix, finding the Gröbner basis would not only allow us to generate all the recursive relations, but also allow us to determine if a proposed recursive relation holds for the moment matrix. In this paper, we present a method for calculating Gröbner bases for recursive relations for 2-variable weighted shifts from their finitely atomic representing measures.

\section{Notation and Preliminary}
In this section we briefly recall some notation and preliminary results that will be needed in the sequel.
\begin{prop}[\cite{cox1997ideals}, Ch 1 \S 4 Prop. 8]
    
Let $V,W \subseteq \mathbb{R}^2$ be algebraic varieties and define $\textbf{I}(V):=\{f\in\mathbb{R}[x,y]:f(x,y)=0\}.$ Then $V=W$ if and only if $\mathbf{I}(V)=\mathbf{I}(W).$
\end{prop}

\begin{notation}
    Following the notation in \cite[Ch 2 \S 6 Def. 3]{cox1997ideals}, let $\overline{f}^F$ denote the remainder on division of $f\in \mathbb{R}[x,y]$ by the ordered $s$-tuple $F:=(f_1,f_2,\cdots,f_s)\subset\mathbb{R}[x,y]$. For $f\in \mathbb{R}[x,y]$ let $\operatorname{LT}(f)$ denote the leading term of $f$ and $\operatorname{LM}(f)$ denote the leading monomial with respect to the chosen ordering.   
\end{notation}

\begin{definition}
        Again following the notation from \cite[Ch 2 \S 6 Def. 4]{cox1997ideals}, given non-zero $f, g \in \mathbb{R}[x,y]$, the \textit{$S$-polynomial} of $f$ and $g$ is

            $$
                S(f, g)=\frac{L}{\operatorname{LT}(f)} f-\frac{L}{\operatorname{LT}(g)} g
                $$

            where $L=\operatorname{lcm}(\operatorname{LM}(f), \operatorname{LM}(g))$.
    \end{definition}
\begin{samepage}
\begin{alg}[Buchberger's Algorithm \cite{cox1997ideals}] Let $\mathcal{I}=\left\langle f_1, \ldots, f_s\right\rangle \neq\{0\}$ be a polynomial ideal. Then a Gröbner basis for $\mathcal{I}$ can be constructed in a finite number of steps by the following algorithm:

Input: $F=\left(f_1, \ldots, f_s\right)$

Output : a Gröbner basis $G=\left(g_1, \ldots, g_t\right)$ for $I$, with $F \subseteq G$
\\

$
G:=F
$

REPEAT

$ \quad \quad
G^{\prime}:=G
$

\quad \quad FOR each pair $\{p, q\}, p \neq q$ in $G^{\prime}$ DO

$$
\begin{array}{l}
r:=\overline{S(p, q)} ^{G^{\prime}} \\
\text{IF } r \neq 0 \text { THEN } G:=G \cup\{r\}
\end{array}
$$

UNTIL $G=G^{\prime}$

RETURN $G.$
    
\end{alg}
\end{samepage}

\begin{theorem}[Division Algorithm  \cite{cox1997ideals}]Fix a monomial order $>$ on $\mathbb{Z}_{+}^n$, and let $F=\left(f_1, \cdots, f_s\right)$ be an ordered s-tuple of polynomials in $\mathbb{R}\left[x_1, \cdots, x_n\right]$. Then every $f \in \mathbb{R}\left[x_1, \cdots, x_n\right]$ can be written as

$$
f=a_1 f_1+\cdots+a_s f_s+r
$$

\noindent where $a_i, r \in \mathbb{R}\left[x_1, \cdots, x_n\right]$, and either $r=0$ or $r$ is a linear combination of monomials with coefficients in $\mathbb{R}$, none of which is divisible by any leading terms of $f_1, \cdots, f_s$. We call $r$ a remainder of $f$ on division by $F$. 
\end{theorem}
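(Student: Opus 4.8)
The plan is to give a constructive proof by exhibiting the division process itself and then verifying that it terminates. I would initialize quotients $a_1 = \cdots = a_s = 0$, a remainder $r = 0$, and an auxiliary dividend $p = f$, and iterate as follows. If $p = 0$, stop. Otherwise inspect $\operatorname{LT}(p)$: if $\operatorname{LT}(f_i)$ divides $\operatorname{LT}(p)$ for some $i$ (say, the least such index), perform a \emph{reduction step}, replacing $a_i$ by $a_i + \operatorname{LT}(p)/\operatorname{LT}(f_i)$ and $p$ by $p - \bigl(\operatorname{LT}(p)/\operatorname{LT}(f_i)\bigr) f_i$; if no $\operatorname{LT}(f_i)$ divides $\operatorname{LT}(p)$, perform a \emph{remainder step}, replacing $r$ by $r + \operatorname{LT}(p)$ and $p$ by $p - \operatorname{LT}(p)$.

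I would then record two loop invariants. The first is the identity $f = a_1 f_1 + \cdots + a_s f_s + p + r$: it holds at the start and is preserved by each step, since in a reduction step the multiple of $f_i$ added into the sum is exactly cancelled by the change in $a_i$, and in a remainder step the term subtracted from $p$ reappears in $r$. The second invariant is that no monomial occurring in $r$ is divisible by any $\operatorname{LT}(f_i)$; this is immediate, because terms are appended to $r$ only during remainder steps, where by construction $\operatorname{LT}(p)$ fails to be divisible by any $\operatorname{LT}(f_i)$. When the loop exits with $p = 0$, the first invariant yields the asserted decomposition and the second gives $r$ the required form. (En route one also sees $\operatorname{LM}(a_i f_i) \le \operatorname{LM}(f)$ for each $i$, the standard degree bound on the quotients, which is worth noting for later use even though the bare statement does not need it.)

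The substance of the argument — and the main obstacle — is termination. In a reduction step the leading terms cancel, $\operatorname{LT}(p) - \bigl(\operatorname{LT}(p)/\operatorname{LT}(f_i)\bigr)\operatorname{LT}(f_i) = 0$, so the leading monomial of $p$ strictly decreases in the chosen monomial order; in a remainder step $\operatorname{LT}(p)$ is removed outright, so $\operatorname{LM}(p)$ again strictly decreases (or $p$ becomes $0$). Hence the successive values of $p$ have strictly decreasing leading monomials. Since every monomial order is a well-ordering on $\mathbb{Z}_{+}^n$ — the point at which one invokes Dickson's lemma — there is no infinite strictly decreasing chain of monomials, so the process halts after finitely many steps, which completes the proof.
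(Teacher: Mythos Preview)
Your proof is correct and is precisely the standard constructive argument from the cited reference (Cox--Little--O'Shea): initialize quotients and remainder, peel off leading terms by reduction or remainder steps while maintaining the invariant $f = a_1 f_1 + \cdots + a_s f_s + p + r$, and appeal to the well-ordering of monomial orders for termination. There is nothing to compare against, however, because the paper does not supply its own proof of this theorem; it is quoted as a preliminary result with a citation and used later (in the proof of Proposition~3.6) as a black box.
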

\section{Finding Recursive Relations}
We first note that the moment matrix corresponding to $\mu = \sum\limits_{i=1}^n\lambda_i \delta(x_i,y_i)$, $M(\mu)(k),$ where $\lambda_i\in\mathbb{R}_{+}$ can expressed as 

$$M(\mu)(k) = \sum\limits_{i=1}^n\lambda_i M(\delta(x_i,y_i))(k).$$

This relationship will allow us to view the recursive relations of $M(\mu)(k)$ by examining the recursive relations for $M(\delta(x_i,y_i))(k),$ as shown in the next proposition.

\begin{notation}
    Let $p_{\mu}(X,Y):=[M(\mu)] \cdot\hat{p}$  and  $p_{(a,b)}(X,Y):=[M(\delta_{(a,b)})] \cdot\hat{p}$. 
\end{notation}
    

\begin{prop}
    Let $\mu$ be an $n$-atomic probability measure with atoms $(x_i,y_i)$ for $1 \leq i \leq n$. Then $p_{\mu}(X,Y)=\boldsymbol{0}$ if and only if $p_{(x_i,y_i)}(X,Y)=\boldsymbol{0}$ for all $1 \leq i \leq n.$
\end{prop}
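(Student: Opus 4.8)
The plan is to exploit the decomposition $M(\mu)(k) = \sum_{i=1}^n \lambda_i M(\delta(x_i,y_i))(k)$ noted at the start of this section, together with the fact that each summand is positive semidefinite (being a Gram matrix, as recalled in the introduction). The key observation is that $p_\mu(X,Y) = [M(\mu)(k)]\cdot\hat p = \sum_{i=1}^n \lambda_i [M(\delta(x_i,y_i))(k)]\cdot\hat p = \sum_{i=1}^n \lambda_i\, p_{(x_i,y_i)}(X,Y)$, so the column relation $p_\mu(X,Y) = \boldsymbol 0$ is precisely the statement that $\sum_i \lambda_i\, p_{(x_i,y_i)}(X,Y) = \boldsymbol 0$.

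The reverse implication is immediate: if $p_{(x_i,y_i)}(X,Y) = \boldsymbol 0$ for every $i$, then the linear combination above is $\boldsymbol 0$, hence $p_\mu(X,Y) = \boldsymbol 0$. The forward implication is the substantive direction. First I would pair $\hat p$ against both sides: since $M(\mu)(k)$ is symmetric, $\hat p^{\,T} M(\mu)(k)\, \hat p = \sum_{i=1}^n \lambda_i\, \hat p^{\,T} M(\delta(x_i,y_i))(k)\, \hat p$. If $p_\mu(X,Y) = \boldsymbol 0$ then the left side is $0$. Each term $\hat p^{\,T} M(\delta(x_i,y_i))(k)\, \hat p \ge 0$ by positive semidefiniteness, and each $\lambda_i > 0$, so every term must vanish: $\hat p^{\,T} M(\delta(x_i,y_i))(k)\, \hat p = 0$ for all $i$. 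Since $M(\delta(x_i,y_i))(k)$ is positive semidefinite, a vector in its ``null quadratic form'' lies in its kernel (the standard fact that $v^T A v = 0$ with $A \succeq 0$ forces $Av = 0$, via the Cauchy--Schwarz inequality applied to the semi-inner product induced by $A$), hence $M(\delta(x_i,y_i))(k)\,\hat p = \boldsymbol 0$, i.e. $p_{(x_i,y_i)}(X,Y) = \boldsymbol 0$ for each $i$.

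The main obstacle, such as it is, is simply being careful that the elementary linear-algebra lemma ($A \succeq 0$ and $v^T A v = 0 \Rightarrow Av = 0$) is invoked cleanly, and that the finite-dimensional truncation $M(\mu)(k)$ rather than the infinite matrix $M(\mu)$ is what the positive-semidefiniteness argument is applied to; everything then transfers to $M(\mu)$ by the remarks in the introduction. I would also remark in passing that this argument is exactly why the \emph{densities} $\lambda_i$ are irrelevant to the recursive relations: the conclusion depends only on which points $(x_i,y_i)$ are atoms, not on their weights, since all that was used about the $\lambda_i$ is that they are strictly positive.
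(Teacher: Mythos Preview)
Your proof is correct and follows essentially the same route as the paper: both use the decomposition $M(\mu)(k) = \sum_i \lambda_i M(\delta(x_i,y_i))(k)$ together with positive semidefiniteness of the summands to pass from the kernel of the sum to the intersection of the kernels. The only difference is presentational---the paper invokes the standard fact $\ker(A+B) = \ker(A)\cap\ker(B)$ for positive semidefinite $A,B$ and applies it inductively, whereas you prove exactly that fact inline via the quadratic-form argument $\hat p^{\,T}A\,\hat p = 0 \Rightarrow A\hat p = \boldsymbol{0}$.
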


\begin{proof}
     We begin with $M(\mu) = \sum\limits_{i=1}^n\lambda_i M(\delta(x_i,y_i))$; this clearly implies the reverse implication.
    
    To prove the forward implication, assume $p_{\mu}(X,Y)=\boldsymbol{0}$ (i.e. $[M(\mu)] \cdot\hat{p} = \boldsymbol{0}$). Without loss of generality assume that the total degree of $p(x,y)$ is $k.$ It is then sufficient to prove that $[M(\mu)(k)] \cdot\hat{p} = \boldsymbol{0}$ implies $[M(\delta(x_i,y_i)(k)] \cdot\hat{p} = \boldsymbol{0}$ for all $1\leq i\leq n.$  Note each matrix in the sum decomposition of $M(\mu)(k)$ will be positive semidefinite as each 1-atomic moment matrix is a Gram matrix. Moreover, linear combinations of positive semidefinite matrices with positive scalars are positive semidefinite. Finally, it is well know that if $A$ and $B$ are two $m \times m$ positive semidefinite matrices, then $\operatorname{ker}(A + B)=\operatorname{ker}(A) \cap \operatorname{ker}(B).$ We then apply this results inductively $n-1$ times and see $p_{(x_i,y_i)}(X,Y)=\boldsymbol{0}$ for all $1 \leq i \leq n,$ giving the desired condition.
\end{proof}
The importance of the above proposition resides in that it tells us the atoms do not cancel each other out. Thus, to find recursive relations for $M(\mu)(k)$ it is sufficient to find the intersection of sets of recursive relations for all $M(\delta{(x_i,y_i)})(k).$  

\begin{definition}
    Let $\mu$ be the finitely atomic probability measure $\mu:= \sum\limits_{i=1}^n\lambda_i \delta(x_i,y_i).$ Then we say

    $$V(\mu)(k) := \begin{bmatrix}
        1 & x_1 & y_1 & x_1^2 & x_1 y_1 & \cdots& x_1 y_1^{k-1}& y_1^{k} \\
        1 & x_2 & y_2 & x_2^2 & x_2 y_2 & \cdots& x_2 y_2^{k-1} &y_2^{k} \\ 
       1 & x_3 & y_3 & x_3^2 & x_3 y_3 & \cdots& x_3 y_3^{k-1}& y_3^{k} \\
        \vdots & \vdots & \vdots & \vdots & \vdots &\ddots & \vdots& \vdots\\
        1 & x_n & y_n & x_n^2 & x_n y_n & \cdots & x_n y_n^{k-1}&y_n^{k}\\
    \end{bmatrix}$$
    is the \textit{2-variable Vandermonde-like matrix} for $\mu.$
    
\end{definition}
It should be noted that $V(\mu)(k)$ need not be square. Hence, this matrix is not invertible and we cannot use the techniques of polynomial interpolation as done in \cite[Thm. 2.13]{kimsey2013truncated}.  Rather, $V(\mu)(k)$ should be viewed as the first rows of all moment matrices in the sum decomposition of $M(\mu)(k)$ (i.e. the first rows of all $M(\delta(x_i,y_i)(k))$.) Since all $M(\delta{(x_i,y_i)}(k))$ must have rank 1, determining recursive relations for $M(\mu)(k)$ can be reduced to finding the kernel of $V(\mu)(k).$
 
\begin{theorem}
    Let $\mu$ be a finitely atomic probability measure. Then $p\in \mathbb{R}_k[x,y]$ is a recursive relation for $M(\mu)(k)$ if and only if $\hat{p} \in \operatorname{ker}(V(\mu)(k)).$ 
\end{theorem}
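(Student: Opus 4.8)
The plan is to reduce the statement about $M(\mu)(k)$ to a statement about the rank-one matrices $M(\delta(x_i,y_i))(k)$ via Proposition 2.6, and then to identify the kernel of each rank-one matrix with the vanishing of a single linear functional, namely evaluation at the atom. By Proposition 2.6, $p_\mu(X,Y)=\boldsymbol{0}$ holds if and only if $p_{(x_i,y_i)}(X,Y)=\boldsymbol{0}$ for every $i$. Since a recursive relation for $M(\mu)(k)$ is by definition a polynomial $p\in\mathbb{R}_k[x,y]$ with $p_\mu(X,Y)=\boldsymbol{0}$ (and the recursively generated / ideal-like property is automatic here because $M(\mu)(k)$ comes from a representing measure, by the result of Curto–Fialkow cited in the introduction), it suffices to show that $\hat p\in\operatorname{ker}(V(\mu)(k))$ is equivalent to $p_{(x_i,y_i)}(X,Y)=\boldsymbol{0}$ for all $i$.

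The key step is the following observation about a single atom. Write $v_i:=(1,x_i,y_i,x_i^2,x_iy_i,\dots,x_iy_i^{k-1},y_i^k)^{\mathsf T}$ for the $i$-th row of $V(\mu)(k)$, so that $v_i$ is precisely the vector of monomials up to total degree $k$ evaluated at $(x_i,y_i)$, listed in lexicographic order. Then $M(\delta(x_i,y_i))(k)=v_iv_i^{\mathsf T}$: indeed the $(r,s)$ entry of the left side is $\int x^{a}y^{b}\,d\delta(x_i,y_i)=x_i^{a}y_i^{b}$ where $x^ay^b$ is the product of the $r$-th and $s$-th basis monomials, which is exactly $(v_i)_r(v_i)_s$. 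Consequently
$$
p_{(x_i,y_i)}(X,Y)=[M(\delta(x_i,y_i))(k)]\cdot\hat p=v_i\,(v_i^{\mathsf T}\hat p)=p(x_i,y_i)\,v_i,
$$
and since $v_i\neq\boldsymbol{0}$ (its first entry is $1$), this vanishes if and only if $p(x_i,y_i)=0$, i.e. if and only if $v_i^{\mathsf T}\hat p=0$. Running this over all $i$, the condition $p_{(x_i,y_i)}(X,Y)=\boldsymbol{0}$ for all $1\le i\le n$ is exactly the condition that $V(\mu)(k)\hat p=\boldsymbol{0}$, i.e. $\hat p\in\operatorname{ker}(V(\mu)(k))$. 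Combining with Proposition 2.6 completes the proof.

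I do not expect a serious obstacle here; the content is essentially bookkeeping. The one point that requires care is making sure the lexicographic labelling of the columns of $M(\mu)(k)$ matches the column labelling used to define $V(\mu)(k)$ in Definition 2.7, so that the same coefficient vector $\hat p$ is being fed to both matrices; once the indexing conventions are pinned down, the factorization $M(\delta(x_i,y_i))(k)=v_iv_i^{\mathsf T}$ and the rank-one computation above are immediate. A secondary subtlety worth a remark is the tacit passage between $M(\mu)(k)$ and the infinite matrix $M(\mu)$: since $p$ has total degree at most $k$, the truncation argument already used in the proof of Proposition 2.6 (together with the Flat Extension Theorem) shows that being a recursive relation at level $k$ and at level $\infty$ coincide, so working with $V(\mu)(k)$ loses nothing.
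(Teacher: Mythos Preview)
Your proof is correct and follows essentially the same approach as the paper: reduce to single atoms via the proposition on kernels (what the paper labels Proposition~3.2), then exploit the rank-one structure of each $M(\delta(x_i,y_i))(k)$ to see that $\hat p$ lies in its kernel precisely when the $i$-th row of $V(\mu)(k)$ annihilates $\hat p$. Your explicit factorization $M(\delta(x_i,y_i))(k)=v_i v_i^{\mathsf T}$ and the identification $v_i^{\mathsf T}\hat p = p(x_i,y_i)$ make the rank-one step a bit cleaner than the paper's ``first entry zero implies all entries zero'' phrasing, but the substance is identical.
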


\begin{proof}
    We proceed by double inclusion to show $\operatorname{ker}(M(\mu)(k)) = \operatorname{ker}(V(\mu)(k))$. Let $\hat{p} \in \operatorname{ker}(V(\mu)(k))$, then 
    
    $$[M(\mu)(k)]\cdot \hat{p} = \sum\limits_{i=1}^n\lambda_i [M(\delta(x_i,y_i))(k)]\cdot \hat{p}.$$

    Since all $M(\delta(x_i,y_i))(k)$ are rank 1, if the  first entry of $[M(\delta(x_i,y_i))(k)] \cdot \hat{p}$ is zero, then $[M(\delta(x_i,y_i))(k) ]\cdot \hat{p}=\boldsymbol{0}$. Since this is true for all $1\leq i\leq n$ we have $[M(\mu)(k)] \cdot \hat{p} =\boldsymbol{0}$ and $\hat{p} \in \operatorname{ker}(M(\mu)(k)).$
    
    Next let $\hat{q} \in \operatorname{ker}(M(\mu)(k))$; then by Proposition 3.2 we know $\hat{q} \in \operatorname{ker}(M(\delta_{x_i,y_i})(k))$ for all $1\leq i \leq n.$ Thus the first entry of $[M(\delta_{x_i,y_i})(k)] \cdot \hat{q}$ must be zero, so $\hat{q} \in \operatorname{ker}(V(\mu)(k))$, which yields $\operatorname{ker}(M(\mu)(k))=\operatorname{ker}(V(\mu)(k))$ as desired.


\end{proof}
Thus finding the null space of $V(\mu)(k)$ will allow us to determine the recursive relations of $M(\mu)(k).$ Also note that in the above theorem and proof, the only requirement for the densities of the atoms is that they are positive. Moreover, the densities are not included in the 2-variable Vandermonde-like matrix. Therefore, we have the following corollary.

\begin{cor}
    The densities of a finitely atomic representing measure do not change the recursive relations of the corresponding moment matrix.
\end{cor}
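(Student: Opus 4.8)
The plan is to observe that the Corollary is an immediate consequence of Theorem 3.5, since the key object there — the 2-variable Vandermonde-like matrix $V(\mu)(k)$ — depends only on the support of $\mu$ and not on the densities $\lambda_i$. Concretely, I would argue as follows. Suppose $\mu = \sum_{i=1}^n \lambda_i \delta(x_i,y_i)$ and $\nu = \sum_{i=1}^n \rho_i \delta(x_i,y_i)$ are two finitely atomic probability measures with the \emph{same} atoms $(x_i,y_i)$ but possibly different densities $\lambda_i, \rho_i \in \mathbb{R}_+$. By inspection of the definition of $V(\mu)(k)$, the entries are built solely from the coordinates $x_i, y_i$ of the atoms; hence $V(\mu)(k) = V(\nu)(k)$, and in particular $\operatorname{ker}(V(\mu)(k)) = \operatorname{ker}(V(\nu)(k))$.

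Next I would apply Theorem 3.5 to each measure separately: $p \in \mathbb{R}_k[x,y]$ is a recursive relation for $M(\mu)(k)$ if and only if $\hat{p} \in \operatorname{ker}(V(\mu)(k))$, and likewise $p$ is a recursive relation for $M(\nu)(k)$ if and only if $\hat{p} \in \operatorname{ker}(V(\nu)(k))$. Combining this with the equality of the two kernels just established, the set of recursive relations for $M(\mu)(k)$ coincides with that for $M(\nu)(k)$. Since $k$ was arbitrary and recursive relations for $M(\mu)(k)$ are automatically recursive relations for the infinite matrix $M(\mu)$ by the Flat Extension Theorem, the full ideal of recursive relations is unchanged.

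There is essentially no obstacle here; the work is entirely front-loaded into Theorem 3.5 and Proposition 3.2, which together show that the kernel of the moment matrix is governed by a matrix in which the densities do not appear. The only point requiring a word of care is that the two measures must share the \emph{same} atoms with the \emph{same} indexing for the matrices $V(\mu)(k)$ and $V(\nu)(k)$ to be literally equal — permuting the atoms permutes the rows of $V$, but this is harmless since row permutations do not change the kernel. I would phrase the corollary's proof in one or two sentences, citing Theorem 3.5 and noting that $V(\mu)(k)$ is manifestly independent of the densities.
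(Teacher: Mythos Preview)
Your proposal is correct and takes essentially the same approach as the paper: the paper simply remarks that in Theorem~3.4 (note: the theorem you cite as ``3.5'' is numbered 3.4 in the paper; 3.5 is the corollary itself) the densities $\lambda_i$ appear nowhere in $V(\mu)(k)$ and the proof requires only that they be positive, from which the corollary follows immediately. Your version, introducing a second measure $\nu$ with the same atoms and different densities and observing $V(\mu)(k)=V(\nu)(k)$, is just a slightly more explicit rendering of the same one-line observation.
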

     
Next, we will use Theorem 3.4 to create a Gröbner basis for the recursive relations of a moment matrix with finitely atomic representing measure.

\begin{prop}
        Theorem 3.4 can be used to construct a Gröbner basis for the recursive relations for a given finitely atomic measure.
\end{prop}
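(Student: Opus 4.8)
The plan is to identify the ideal of all recursive relations of $M(\mu)$ with the vanishing ideal of the set of atoms, to extract a finite generating set of that ideal from the matrices supplied by Theorem 3.4, and then to run Buchberger's Algorithm. Write $\mu = \sum_{i=1}^{n}\lambda_i\delta(x_i,y_i)$ and put $\Delta := \{(x_i,y_i):1\le i\le n\}$, so $n = \operatorname{card}\operatorname{supp}(\mu)$. Since $[V(\mu)(k)]\cdot\hat{p}$ is the vector $(p(x_1,y_1),\dots,p(x_n,y_n))$, Theorem 3.4 says that, once coefficient vectors are identified with polynomials, $\operatorname{ker}V(\mu)(k)$ is precisely $\mathbf{I}(\Delta)\cap\mathbb{R}_k[x,y]$. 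By the Flat Extension Theorem every recursive relation of $M(\mu)(k)$ is a recursive relation of $M(\mu)$; conversely, a recursive relation of $M(\mu)$ has some finite total degree $k$ and, read only on the rows indexed by monomials of degree $\le k$, is a column dependence of $M(\mu)(k)$, hence a recursive relation of $M(\mu)(k)$. Since these relations form an ideal $\mathcal{I}$ by \cite[Cor.~2.2]{laurent2005revisiting}, we conclude $\mathcal{I} = \bigcup_k\bigl(\mathbf{I}(\Delta)\cap\mathbb{R}_k[x,y]\bigr) = \mathbf{I}(\Delta)$.

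Next I would turn this identification into an explicit construction. By the Hilbert Basis Theorem $\mathbf{I}(\Delta)$ is finitely generated, and its generators may be taken of total degree at most $n$ (sharply so, as $n$ collinear atoms show, where $\mathbf{I}(\Delta)=\langle\ell,q\rangle$ with $\ell$ the line and $\deg q=n$); fix such a bound $N\le n$. Using ordinary linear algebra, compute a basis $B$ of the null space $\operatorname{ker}V(\mu)(N)$ and reinterpret its elements as polynomials. Then $B$ spans $\mathbf{I}(\Delta)\cap\mathbb{R}_N[x,y]$, which contains a generating set of $\mathbf{I}(\Delta)$, so $\langle B\rangle = \mathbf{I}(\Delta) = \mathcal{I}$. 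Feeding $B$ to Buchberger's Algorithm — legitimate since $\langle B\rangle$ is a nonzero ideal — yields, in finitely many steps, a Gröbner basis $G$ of $\langle B\rangle = \mathcal{I}$, which is exactly the content of the Proposition. If one prefers not to invoke the degree bound in advance, one may instead run the computation for $k=1,2,\dots$, at each stage forming the reduced Gröbner basis $G_k$ of $\langle\operatorname{ker}V(\mu)(k)\rangle$ and stopping the first time the standard monomials of $G_k$ number exactly $n$; then $\langle G_k\rangle\subseteq\mathbf{I}(\Delta)$ together with $\dim\mathbb{R}[x,y]/\langle G_k\rangle = n = \dim\mathbb{R}[x,y]/\mathbf{I}(\Delta)$ forces $\langle G_k\rangle = \mathcal{I}$, and finite generation of $\mathbf{I}(\Delta)$ guarantees the stopping condition is eventually met.

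The part I expect to require the most care is making the construction genuinely effective: supplying (or citing) the degree bound on the generators of $\mathbf{I}(\Delta)$, or equivalently justifying the colength-based stopping rule in the iterative variant. Everything else is routine — Theorem 3.4 provides the passage from the moment data to the vanishing ideal $\mathbf{I}(\Delta)$, and Buchberger's Algorithm, whose termination is already recorded above, provides the Gröbner basis. A minor point to verify along the way is the restriction claim used above, that a column dependence of the infinite moment matrix, read on the degree-$\le k$ rows, is a column dependence of $M(\mu)(k)$; this is immediate once one recalls that $M(\mu)(k)$ is the compression of $M(\mu)$ to the monomials of total degree at most $k$.
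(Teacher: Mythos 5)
Your argument is correct and shares the paper's overall architecture --- use Theorem 3.4 to identify the ideal $\mathcal{I}$ of recursive relations with the vanishing ideal $\mathbf{I}(\Delta)$ of the atoms, show that $\operatorname{ker}V(\mu)(N)$ for a suitable degree $N$ already generates $\mathcal{I}$, and finish with Buchberger's Algorithm --- but it departs from the paper at the one step where the choice matters: the degree bound $N$. The paper takes $N=k+1$ with $n\le\frac{(k+1)(k+2)}{2}$, so $N\sim\sqrt{2n}$, on the grounds that every monomial of total degree $k+1$ is then a leading monomial of some recursive relation; that claim compares $\operatorname{rank}M(\mu)(k+1)$ with the \emph{number} of lower-degree columns rather than with their rank, and it fails for degenerate configurations. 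Indeed, for $n\ge 3$ collinear atoms on a line $\ell=0$ one has $k+1<n$, every relation of total degree at most $k+1$ is a multiple of $\ell$, and $\langle\operatorname{ker}V(\mu)(k+1)\rangle=\langle\ell\rangle$ is strictly smaller than $\mathbf{I}(\Delta)=\langle\ell,q\rangle$ with $\deg q=n$ --- precisely the example you invoke for sharpness. Your bound $N\le n$ is therefore not merely an alternative but a necessary correction. The only item you leave uncited is that bound itself; it has a one-line proof you should include: the atoms are distinct, so the maximal ideals of the individual points are pairwise comaximal, hence $\mathbf{I}(\Delta)$ equals their product, which is generated by products of $n$ linear forms, i.e.\ in degree $n$ (the Hilbert Basis Theorem alone supplies no degree bound). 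Your iterative variant is also sound --- the standard-monomial count of a reduced Gr\"obner basis equals $\dim\mathbb{R}[x,y]/\langle G_k\rangle$, and an inclusion of ideals of equal finite colength is an equality --- and has the practical merit of certifying correctness at termination without any a priori bound.
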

\begin{proof}
    Without loss of generality, assume that $\mu$ is an $n$-atomic measure. Then let $k$ be such that $n \leq \frac{(k+1)(k+2)}{2}.$ We then employ Theorem 3.4 for recursive relations with total degree at most $k+1$. By our choice of $k$, all monomials of total degree $k+1$ will be the leading monomials for some recursive relation as $\operatorname{card}\operatorname{supp} \mu \geq\operatorname{rank}( M(\mu)(k+1)).$ We know from \cite[Cor 2.2]{laurent2005revisiting} that the kernel of $M(\mu)$ is an ideal, denote this by $\mathcal{I} \subseteq \mathbb{R}[x,y].$ Let $\mathcal{I}' \subseteq \mathbb{R}[x,y]$ be the ideal generated by the polynomials that correspond to the basis of the null space for $V(\mu)(k+1)$. From Theorem 3.4 we see $\mathcal{I}'\subseteq \mathcal{I}$ as by construction all elements in $\mathcal{I}'$ will correspond to elements of $\operatorname{ker}([M(\mu)]).$ Next let $f\in \mathcal{I},$ if $\operatorname{deg}(f)\leq k+1,$ then $f \in \mathcal{I}'$ by Theorem 3.4. Alternatively, if $\operatorname{deg}(f) > k+1,$ we can use the Division Algorithm (Theorem 2.5) to reduce the total degree to less than or equal to $k+1,$ thus $\mathcal{I}'=\mathcal{I}$. We can then apply Buchberger's Algorithm (Algorith 2.4) to create the Gröbner basis for $\mathcal{I}.$

\end{proof}

 Propositions 2.1 and 3.6 allow us to deduce that each set of atoms corresponds to a unique Gröbner basis of recursive relations. Alternatively, if we were given a Gröbner basis for an ideal in $\mathbb{R}[x,y]$, then we could determine the algebraic variety simply by determining the zeros of all the polynomials in the Gröbner simultaneously. We then check if all elements in the algebraic variety are in elements of $\mathbb{R}_+^2$. If so, we can create a 2-variable weighted shift with recursive relations that are exactly those generated by the Gröbner basis. Without loss of generality, assume there are $n$ points in the algebraic variety. Let $\{(x_i,y_i)\}_{1 \leq i\leq n}$ denote these points and choose any set of $n$ positive real numbers $\{\lambda_i\}_{i=1}^n$ such that $\sum\limits_{i=1}^n\lambda_i=1.$ Then a measure that has the given recursive relations will be $\sum\limits_{i=1}^n\lambda_i \delta(x_i,y_i).$ Unfortunately, due to Corollary 3.5, we cannot determine the measure explicitly, we can only determine the atoms from the recursive relations. This is also seen by the fact that any set of $n$ positive real numbers $\{\lambda_i\}_{i=1}^n$ such that $\sum\limits_{i=1}^n\lambda_i=1$ will be sufficient.

\section{An Example}
In this section we give a basic example to illustrate the above results. 

Let $\mu:= \frac{1}{2}\delta(2,4)+\frac{1}{3}\delta(3,2) + \frac{1}{6}\delta(1,1)$. Then 

$$V(\mu)(2)=  \begin{bmatrix}
1 & 2 & 4 & 4 & 8 & 16 \\
1 & 3 & 2 & 9 & 6 & 4 \\
1 & 1 & 1 & 1 & 1 & 1
\end{bmatrix}$$

and 

$$\operatorname{ker}(V(\mu)(2))=\begin{bmatrix}
16 & 6 & -27 & 0 & 0 & 5 \\
12 & -8 & -9 & 0 & 5 & 0 \\
14 & -21 & 2 & 5 & 0 & 0
\end{bmatrix}.$$

Theorem 3.4 and $\operatorname{ker}(V(\mu)(2))$ imply that the recursive relations for $M(\mu)(2)$ are
\begin{align*}
    &14 - 21 x +2y+ 5x^2,\\
    &12 - 8 x -9y+ 5xy, \text{ and}\\
    &16 +6 x -27y+ 5y^2. 
\end{align*}

We can use these recursive relations and the ideal-like property to find additional recursive relations for  $M(\mu)(3).$ In fact those recursive relations can be used to generate $M(\mu)(3)$ from $M(\mu)(2).$ For example 

$$p(X,Y):=X(14 - 21 X +2Y+ 5X^2) = 14X - 21 X^2 +2XY+ 5X^3$$

will be a recursive relation. We can see this as $M(\mu)(3)\cdot\hat{p} = \mathbf{0},$

$$\begin{bmatrix}

1 & \frac{13}{6} & \frac{17}{6} & \frac{31}{6} & \frac{37}{6} & \frac{19}{2} & \frac{79}{6} & \frac{85}{6} & \frac{121}{6} & \frac{209}{6} \\[0.1cm]
\frac{13}{6} & \frac{31}{6} & \frac{37}{6} & \frac{79}{6} & \frac{85}{6} & \frac{121}{6} & \frac{211}{6} & \frac{205}{6} & \frac{265}{6} & \frac{433}{6} \\[0.1cm]
\frac{17}{6} & \frac{37}{6} & \frac{19}{2} & \frac{85}{6} & \frac{121}{6} & \frac{209}{6} & \frac{205}{6} & \frac{265}{6} & \frac{433}{6} & \frac{267}{2} \\[0.1cm]
\frac{31}{6} & \frac{79}{6} & \frac{85}{6} & \frac{211}{6} & \frac{205}{6} & \frac{265}{6} & \frac{583}{6} & \frac{517}{6} & \frac{601}{6} & \frac{913}{6} \\[0.1cm]
\frac{37}{6} & \frac{85}{6} & \frac{121}{6} & \frac{205}{6} & \frac{265}{6} & \frac{433}{6} & \frac{517}{6} & \frac{601}{6} & \frac{913}{6} & \frac{1633}{6} \\[0.1cm]
\frac{19}{2} & \frac{121}{6} & \frac{209}{6} & \frac{265}{6} & \frac{433}{6} & \frac{267}{2} & \frac{601}{6} & \frac{913}{6} & \frac{1633}{6} & \frac{3137}{6} \\[0.1cm]
\frac{79}{6} & \frac{211}{6} & \frac{205}{6} & \frac{583}{6} & \frac{517}{6} & \frac{601}{6} & \frac{1651}{6} & \frac{1357}{6} & \frac{1417}{6} & \frac{1969}{6} \\[0.1cm]
\frac{85}{6} & \frac{205}{6} & \frac{265}{6} & \frac{517}{6} & \frac{601}{6} & \frac{913}{6} & \frac{1357}{6} & \frac{1417}{6} & \frac{1969}{6} & \frac{3361}{6} \\[0.1cm]
\frac{121}{6} & \frac{265}{6} & \frac{433}{6} & \frac{601}{6} & \frac{913}{6} & \frac{1633}{6} & \frac{1417}{6} & \frac{1969}{6} & \frac{3361}{6} & \frac{6337}{6} \\[0.1cm]
\frac{209}{6} & \frac{433}{6} & \frac{267}{2} & \frac{913}{6} & \frac{1633}{6} & \frac{3137}{6} & \frac{1969}{6} & \frac{3361}{6} & \frac{6337}{6} & \frac{4139}{2}

\end{bmatrix}
\cdot
\begin{bmatrix}
    0\\[0.1cm]
    14\\[0.1cm]
    0\\[0.1cm]
    -21\\[0.1cm]
    2\\[0.1cm]
    0\\[0.1cm]
    5\\[0.1cm]
    0\\[0.1cm]
    0\\[0.1cm]
    0
\end{bmatrix}
= 
\begin{bmatrix}
    0\\[0.1cm]
    0\\[0.1cm]
    0\\[0.1cm]
    0\\[0.1cm]
    0\\[0.1cm]
    0\\[0.1cm]
    0\\[0.1cm]
    0\\[0.1cm]
    0\\[0.1cm]
    0
\end{bmatrix}.
$$

Alternatively, we could have generated $M(\mu)(3)$ from $M(\mu)(2)$ after finding a recursive relation where each degree 3 monomial is the leading monomial with respect to lexicographic ordering. For example, using the ideal-like property we have the recursive relations: 
\begin{align*}
    &14x - 21 x^2 +2xy+ 5x^3,\\
    &12x - 8 x^2 -9xy+ 5x^2y, \\
    &12y - 8 xy -9y^2+ 5xy^2, \text{ and} \\
    &16y +6 xy -27y^2+ 5y^3.
\end{align*}

Thus let 

$$C:=\begin{bmatrix}
    0 & \frac{-14}{5} & 0 &\frac{21}{5} & \frac{-2}{5} & 0\\[0.1cm]
    0 & \frac{-12}{5} & 0 &\frac{8}{5} & \frac{9}{5} & 0\\[0.1cm]
    0 & 0 & \frac{-12}{5} &0 & \frac{8}{5} & \frac{9}{5}\\[0.1cm]
    0 & 0 & \frac{-16}{5} &0 & \frac{-6}{5} & \frac{27}{5}
\end{bmatrix},$$
so then

$$M(\mu)(3) = \begin{bmatrix}
    M(\mu)(2) & M(\mu)(2) \cdot C^T\\
    C\cdot M(\mu)(2)& C\cdot M(\mu)(2) \cdot C^T
\end{bmatrix}.$$

Note the changing of the signs for the entries in $C.$ This due to the recursive relations themselves will equal zero,  $14X - 21 X^2 +2XY+ 5X^3 = \mathbf{0}$, thus to generate the columns $\frac{-14}{5}X + \frac{21}{5} X^2 -\frac{2}{5}XY=X^3.$ 

Next, let $\nu:= \frac{1}{3}\delta(2,4)+\frac{1}{3}\delta(3,2) + \frac{1}{3}\delta(1,1)$. Then by Corollary 3.5, $M(\nu)(3)$ and $M(\mu)(3)$ will have the same recursive relations. Observe $M(\nu)(3) \cdot \hat{p} = \mathbf{0}$ where $\hat{p}$ is the same as above.
$$
\begin{bmatrix}
1 & 2 & \frac{7}{3} & \frac{14}{3} & 5 & 7 & 12 & \frac{35}{3} & 15 & \frac{73}{3} \\[0.1cm]
2 & \frac{14}{3} & 5 & 12 & \frac{35}{3} & 15 & \frac{98}{3} & 29 & \frac{101}{3} & 51 \\[0.1cm]
\frac{7}{3} & 5 & 7 & \frac{35}{3} & 15 & \frac{73}{3} & 29 & \frac{101}{3} & 51 & 91 \\[0.1cm]
\frac{14}{3} & 12 & \frac{35}{3} & \frac{98}{3} & 29 & \frac{101}{3} & 92 & \frac{227}{3} & 79 & \frac{329}{3} \\[0.1cm]
5 & \frac{35}{3} & 15 & 29 & \frac{101}{3} & 51 & \frac{227}{3} & 79 & \frac{329}{3} & 187 \\[0.1cm]
7 & 15 & \frac{73}{3} & \frac{101}{3} & 51 & 91 & 79 & \frac{329}{3} & 187 & \frac{1057}{3} \\[0.1cm]
12 & \frac{98}{3} & 29 & 92 & \frac{227}{3} & 79 & \frac{794}{3} & 205 & \frac{581}{3} & 243 \\[0.1cm]
\frac{35}{3} & 29 & \frac{101}{3} & \frac{227}{3} & 79 & \frac{329}{3} & 205 & \frac{581}{3} & 243 & \frac{1169}{3} \\[0.1cm]
15 & \frac{101}{3} & 51 & 79 & \frac{329}{3} & 187 & \frac{581}{3} & 243 & \frac{1169}{3} & 715 \\[0.1cm]
\frac{73}{3} & 51 & 91 & \frac{329}{3} & 187 & \frac{1057}{3} & 243 & \frac{1169}{3} & 715 & 1387

\end{bmatrix}
\cdot
\begin{bmatrix}
    0\\[0.1cm]
    14\\[0.1cm]
    0\\[0.1cm]
    -21\\[0.1cm]
    2\\[0.1cm]
    0\\[0.1cm]
    5\\[0.1cm]
    0\\[0.1cm]
    0\\[0.1cm]
    0
\end{bmatrix}
= 
\begin{bmatrix}
    0\\[0.1cm]
    0\\[0.1cm]
    0\\[0.1cm]
    0\\[0.1cm]
    0\\[0.1cm]
    0\\[0.1cm]
    0\\[0.1cm]
    0\\[0.1cm]
    0\\[0.1cm]
    0
\end{bmatrix}.
$$

Now using Proposition 2.7 we can construct a Gröbner basis for the recursive relations for both $M(\mu)$ and $M(\nu)$; explicitly, the Gröbner basis is $\{8 - 14 y + 7 y^2 - y^3, 16 + 6 x - 27 y + 5 y^2\}.$

\bibliography{mybibliography.bib}
\bibliographystyle{amsplain}

\end{document}